\newcommand{\EE}{{\mathcal E}}
\newcommand{\Q}{{\mathbb Q}}
\newcommand{\rank}{\operatorname{rank}}
\newcommand{\Z}{{\mathbb Z}}
\newcommand{\Magma}{{\sf MAGMA }}
\newenvironment{Proof}{\par\noindent{\sc Proof:}}%
                      {\hspace*{\fill}\nobreak$\Box$\par\medskip}
                       {\hspace*{\fill}\nobreak$\Box$\par\medskip}
\newtheorem{Proposition}{Proposition}[section]
\newtheorem{Theorem}[Proposition]{Theorem}
\newtheorem{Lemma}[Proposition]{Lemma}
\newtheorem{Corollary}[Proposition]{Corollary}
\theoremstyle{definition}
\newtheorem{Remark}[Proposition]{Remark}
\newtheorem{example}{Example}
\renewcommand{\baselinestretch}{1.1}
\begin{document}
\title[Rational Points in Geometric Progression on the Unit Circle]
{Rational Points in Geometric Progression on the Unit Circle}
\author[G. S. \c{C}elik, M. Sadek and G. Soydan]{Gamze Sava\c{s} \c{C}elik, Mohammad Sadek and G\"{o}khan Soydan}

\address{{\bf Gamze Sava\c{s} \c{C}elik}\\
Department of Mathematics \\
Bursa Uluda\u{g} University\\
G\"{o}r\"{u}kle Campus, 16059 Bursa, Turkey}
	
\email{gamzesavascelik@gmail.com}

\address{{\bf Mohammad Sadek}\\
Faculty of Engineering and Natural Sciences\\
Sabanc{\i} University\\34956 Tuzla, \.{I}stanbul\\Turkey}
	
\email{mmsadek@sabanciuniv.edu}

\address{{\bf G\"{o}khan Soydan} \\
Department of Mathematics \\
Bursa Uluda\u{g} University\\
G\"{o}r\"{u}kle Campus, 16059 Bursa, Turkey}
\email{gsoydan@uludag.edu.tr }

\newcommand{\acr}{\newline\indent}
	
\subjclass[2010]{11G05, 14G05}
\keywords{elliptic curve, geometric progression, Huff curve, rational point, unit circle}

\maketitle
	
\begin{abstract}
  A sequence of rational points on an algebraic planar curve is said to form an $r$-geometric progression sequence if either the abscissae or the ordinates of these points form a geometric progression sequence with ratio $r$. In this work, we prove the existence of infinitely many rational numbers $r$ such that for each $r$ there exist infinitely many $r$-geometric progression sequences on the unit circle $x^2 + y^2 = 1$ of length at least $3$. 	
\end{abstract}

\bigskip

\section{Introduction}
\label{intro}
In the past 30 years, several mathematicians have been interested in whether the $x$-coordinates (or $y$-coordinates) of rational points on various familes of algebraic plane curves over the rational field $\Q$ form a sequence of rational numbers which is either an arithmetic or a geometric progression sequence. In 1992, Lee and V\'{e}lez, \cite{LV}, proved that for infinitely many rational values $a$ the curve $y^2 = x^3+a$ possesses a sequence of rational points whose $x$-coordinates form an arithmetic progression sequence of length at least 4. Seven years later, Bremner, \cite{Br}, discussed arithmetic progression sequences on other families of elliptic curves over $\Q$. He investigated the size of these sequences and produced elliptic curves with 7-term and 8-term arithmetic progression sequences. The problem of finding rational points in arithmetic progression on elliptic curves, hyperelliptic curves and different models of elliptic curves including Edwards and Huff curves has been explored extensively, \cite{Al,BrE,Cam,Ch,Mac,Mod1,Mod3,ST,U1,U2}.

In 2013, the first work on geometric progression sequences was introduced by Bremner and Ulas, \cite{BU}. For any nontrivial four term geometric progression $x_i$, $i=1,2,3,4$, they first proved that there exist infinitely many pairwise non-isomorphic hyperelliptic curves $C: y^2=ax^n+b$ such that $x_i$ is the $x$-coordinate of a rational point on $C$. Secondly, they showed that there exist infinitely many parabolas $y^2=ax+b$ which contain five points in geometric progression. Three years later, Alaa and Sadek, \cite{AS}, found a family of hyperelliptic curves defined by the equation $y^2=ax^{2n}+bx^n+a$, $a,b\in\mathbb{Z}$, such that this family posseses a geometric progression sequence of rational points whose length is at least 8. In 2017, Ciss and Moody, \cite{Mod2}, considered long geometric progressions on different models of elliptic curves, including Weierstrass curves, Edwards curves, Huff curves and quartic curves. They presented infinite families of (twisted) Edwards curves, and Huff curves containing five rational points in geometric progression, an infinite family of Weierstrass curves containing eight points in progression, as well as an infinite family of quartic curves with 10-term progression sequences. Given an arbitrary sequence of rational points of length 4, 5, or 6, \c{C}elik, Sadek and Soydan, \cite{CSS}, provided explicit examples of (twisted) Edwards curves and (general) Huff curves on which this sequence is realized as the $x$-coordinated of rational points.

Arithmetic and geometric progression sequences have also been studied on conics. In 2013, Alvarado and Goins, \cite{AG}, gave a generalization of 3-term arithmetic progressions on an arbitrary conic section. Three years later, Choudhry and Juyal, \cite{Ch2}, parameterized infinitely many arithmetic progressions of three rational points on the unit circle, such that the three points lie in the first quadrant. They also used these progressions to derive infinitely many arithmetic progressions on the ellipse $x^2/a^2+y^2/b^2=1$. Recently, Ciss and Moody, \cite{Mod4}, considered long arithmetic progressions on conics. They gave a slightly more general result on 3-term arithmetic progressions on the unit circle $x^2+y^2=1$, and similarly on the unit hyperbola $x^2-y^2=1$. They also provided infinitely many conics $ax^2+cy^2=1$ that have 8-term arithmetic progression sequences.

In the present work, we investigate sequences of rational points in geometric progression on the unit circle. We prove the existence of infinitely many rational numbers $r$ such that for each $r$, there exist geometric progression sequences of rational points on the unit circle $ x^2+y^2=1$ with ratio $r$ and length at least $3$. This will be accomplished by analyzing the arithmetic of several rational elliptic surfaces. In addition, we prove that there are infinitely many rational points $P$ on the unit circle such that $P$ is a term in infinitely many geometric progression sequences of length at least 3.

\bigskip
\section{Geometric progression sequences of length $2$ on the unit circle}\label{sec:2}

Let $C$ be the unit circle $x^2+y^2=1$. The set $C(K)$ of $K$-rational points on $C$ is defined by $C(\Q) = \{(x,y) : x^2+y^2=1, x,y \in \Q\}$.

A sequence of rational points $(x_1,y_1),\ldots,(x_n,y_n)$ in $C(\Q)$ will be said to form an $r$-geometric progression sequence of length $n$ if $x_{i}/x_{i-1}=r$ for every $i=2,\ldots,n$.

\begin{Lemma}
\label{lem1}
  Let $\displaystyle r=\frac{-4m}{m^2+2}$ where $m\in\Q$. There exists infinitely many pairs of rational points $(x_1,y_1),(x_2,y_2)\in C(\Q)$ such that $x_2/x_1=r$. In particular, there are infinitely many $r$-geometric progression sequences of length $2$.
\end{Lemma}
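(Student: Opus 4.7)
My plan is to reduce the existence of infinitely many such pairs to the existence of infinitely many rational points on an explicit elliptic curve depending on $m$. Using the standard rational parameterisation of $C$, I write $(x_i, y_i) = (x(t_i), y(t_i))$ with $x(t) = (1 - t^2)/(1 + t^2)$ and $y(t) = 2t/(1+t^2)$. A direct calculation, based on the identities
\[
(m^2+2)(1+t^2) + 4m(1-t^2) = a + bt^2, \qquad (m^2+2)(1+t^2) - 4m(1-t^2) = b + at^2,
\]
where $a := m^2 - 4m + 2$ and $b := m^2 + 4m + 2$, reduces the condition $x_2 = r\,x_1$ (with $r = -4m/(m^2+2)$) to
\[
t_2^2 \;=\; \frac{b + a\,t_1^2}{a + b\,t_1^2}.
\]
Thus pairs $((x_1,y_1),(x_2,y_2))\in C(\Q)^2$ with the required ratio $r$ correspond (up to the signs of $y_1,y_2$) to rational points $(t,s)$ on the affine curve
\[
E_m :\; s^2 \;=\; (a + bt^2)(b + at^2),
\]
which is a smooth genus-one curve for generic $m$. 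The evident rational points $(\pm 1, \pm(a+b))$ correspond to $x_1 = 0$ and are therefore degenerate; non-degenerate rational points are exactly what we want.

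The heart of the proof is then to show that $E_m$ admits infinitely many non-degenerate rational points. Motivated by the factorisation
\[
(a + bt^2)(b + at^2) \;=\; ab\,(1+t^2)^2 + (a-b)^2 t^2 \;=\; ab\,(1+t^2)^2 + 64m^2 t^2,
\]
I would search for an explicit non-trivial rational section $(t,s) = (\varphi(m),\psi(m))$ of the elliptic surface $\{E_m\}_{m\in\Q}$ arising from a decomposition $s = \alpha(1+t^2) + \beta\,t$ with $\alpha,\beta\in\Q(m)$. After placing $E_m$ in Weierstrass form by a standard birational change of variables and identifying the torsion subgroup, I would verify — by a height computation or by reduction modulo a prime of good reduction — that the exhibited section has infinite order on the generic fiber $E/\Q(m)$. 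The specialisation theorem then guarantees that the section remains of infinite order on $E_m(\Q)$ for all but finitely many rational $m$, and repeated application of the chord-and-tangent group law generates an infinite sequence of distinct rational points on $E_m$. Translating back through the parameterisation yields infinitely many non-degenerate pairs on $C$ with ratio $r$.

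The principal technical obstacle is the construction of the non-trivial section and the verification that it is non-torsion on the generic fiber; this is where the specific shape $r = -4m/(m^2+2)$ is essential, as it produces a sufficiently symmetric quartic — reflected in the identity displayed above — to admit such a section. Once the section is in place, the specialisation argument and the iteration of the group law are routine.
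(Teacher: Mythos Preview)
Your reduction is sound and runs parallel to the paper's: both arguments show that pairs $(x_1,y_1),(x_2,y_2)\in C(\Q)$ with $x_2/x_1=r$ are parametrised by rational points on a genus-one curve over $\Q$ depending on $r$ (equivalently on $m$). The paper realises this curve as the intersection of quadrics $x_1^2+y_1^2=z^2$, $r^2x_1^2+y_2^2=z^2$ in $\PP^3$ and passes to the Weierstrass model $E_r:y^2=x(x-4)(x-4r^2)$; you realise it via the circle parametrisation as the quartic $s^2=(a+bt^2)(b+at^2)$. These are birationally the same curve, and your derivation of $t_2^2=(b+at_1^2)/(a+bt_1^2)$ is correct (the two displayed identities preceding it are interchanged, but the conclusion is unaffected).

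The genuine gap is that you never produce the non-torsion section; you only announce a search for one via an ansatz $s=\alpha(1+t^2)+\beta t$. This is precisely the content of the lemma, not a technicality to be deferred: the reduction to an elliptic surface is routine, and everything hinges on exhibiting the point. The paper does this explicitly. On $E_r$ one sets $x=2r^2$, which forces $y^2=8r^4(2-r^2)$; this is a rational square exactly when $2-r^2$ is twice a square, i.e.\ when $(r,s_0)$ lies on the conic $s^2+2r^2=4$. The hypothesis $r=-4m/(m^2+2)$ is nothing other than the rational parametrisation of that conic (through the point $(0,2)$), and one gets the explicit point $P_0=(2r^2,2r^2s_0)$ with $s_0=2(m^2-2)/(m^2+2)$. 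That is where the specific shape of $r$ enters, and it is the step your outline is missing. Your proposed ansatz, incidentally, cannot be made to match the quartic identically in $t$ over $\Q(m)$, since equating leading coefficients would force $\alpha^2=ab=m^4-12m^2+4$; so if you pursue your model you will need a different device, or simply transport the paper's point $P_0$ through the birational isomorphism between $E_r$ and your quartic.
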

\begin{Proof}
Let $(x_1,y_1)$ and $(rx_1,y_2)$ be such that $x_1^2+y_1^2=1$ and $(rx_1)^2+y_2^2=1$. We consider the following intersection of two quadratic surfaces in $\mathbb{P}^3$
\[H_r:x_1^2+y_1^2=z^2,\qquad r^2x_1^2+y_2^2=z^2.\] One may check using \Magma \cite{Magma} that the latter curve is an elliptic curve that may be described by the following Weierstrass equation
\[E_r:y^2=x(x-4)(x-4 r^2).\] A full description of the birational isomorphism $\phi_r:E_r\to H_r$ between the two curves can be found for example in \cite[Theorem 3.1]{An}.
Choosing $x=2r^2$ yields that $y^2=-8(r^2-2)r^4$. If $(r_0,s_0)$ is a point on the conic $Q:s^2+2r^2=4$, then $P_0=(x(P_0),y(P_0))=(2r_0^2,2r_0^2s_0)\in E_{r_0}(\Q)$ is a point of infinite order. Since the conic $Q$ possesses one rational point $(r,s)=(0,2)$, it contains infinitely many rational points whose parametrization is given by $(r,s)=\left(-4m/(m^2+2),2 ( m^2-2)/( m^2+2)\right)$. Setting $mP_0$ to be the $m$-th multiple of $P_0$, then $Q_m:=\phi_r(mP_0):=(x_1,y_1,y_2,z)\in H_r(\Q)$ where $x_1$ and $x_2=rx_1$ are the $x$-coordinates of rational points forming an $r$-geometric sequence in $C(\Q)$. This proves the statement.
\end{Proof}
In fact, the lemma above can be strengthened as follows:
\begin{Proposition}
\label{prop1}
 There are infinitely many rational numbers $\displaystyle r$ such that for each such $r$,
  there exists infinitely many pairs of rational points $(x_1,y_1),(x_2,y_2)\in C(\Q)$ with $x_2/x_1=r^2$.
\end{Proposition}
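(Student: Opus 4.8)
The plan is to follow the proof of Lemma~\ref{lem1} with the ratio $r^2$ in place of $r$. Exactly as there, a pair $(x_1,y_1),(x_2,y_2)\in C(\Q)$ with $x_2/x_1=r^2$ corresponds, via the birational map $\phi_r$, to a rational point on the elliptic curve
\[ E_r:\ y^2=x(x-4)(x-4r^4), \]
arising from the intersection of the quadrics $x_1^2+y_1^2=z^2$ and $r^4x_1^2+y_2^2=z^2$. Thus it suffices to produce infinitely many $r\in\Q$ for each of which $E_r(\Q)$ contains a point of infinite order, since the multiples of such a point yield infinitely many pairs.

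Here lies the one genuine difficulty, and the reason the statement is delicate. The choice $x=2r^4$ that worked in Lemma~\ref{lem1} now forces the auxiliary curve $s^2=4-2r^4$, which is of genus one (its Jacobian is $y^2=x^3+2x$, of rank zero) rather than a conic; more is true, for one checks that $E_r$ carries no point of infinite order over $\Q(r)$, so no point varying algebraically with $r$ can succeed. The key idea I would use to bypass this is to hold the abscissa of the sought point fixed and equal to a perfect square, say $x=a^2$. The requirement $(a^2,y)\in E_r(\Q)$ then becomes $y^2=a^2(a^2-4)(a^2-4r^4)$, i.e., after setting $y=aY$, a rational point on the genus-one curve
\[ \mathcal C_a:\ Y^2=(a^2-4)(a^2-4r^4) \]
in the variables $(r,Y)$. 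Choosing a square abscissa is precisely what makes $\mathcal C_a$ carry the visible rational point $(r,Y)=(1,a^2-4)$, so that $\mathcal C_a$ is isomorphic to its Jacobian, which a short computation identifies as $y^2=x^3+\bigl(a(a^2-4)\bigr)^2x$.

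It then remains only to pick $a$ making this Jacobian have positive rank. Taking $a=6$ gives $\mathcal C_6:\ Y^2=32(36-4r^4)$ with Jacobian $\cong y^2=x^3+9x$; the point $(4,10)$ on the latter is of infinite order, since its double has non-integral abscissa $49/400$, so $\mathcal C_6$ has rank at least one and hence infinitely many rational points, which project to infinitely many values of $r$. For all but finitely many such $r$ the point $(36,y)\in E_r(\Q)$ has infinite order: it is never $2$-torsion, as $4r^4=36$ has no rational solution, and a fixed abscissa can meet a point of order $4$, $6$ or $8$ for at most finitely many $r$. Each such $r$ therefore produces infinitely many pairs with $x_2/x_1=r^2$, as required. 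The main obstacle is the middle step: recognising that fixing the abscissa to a square constant converts the ``moving point'' obstruction into a single genus-one curve with an evident rational point, and then exhibiting a value of $a$ whose Jacobian $y^2=x^3+(a(a^2-4))^2x$ has positive rank (equivalently, producing a point of infinite order such as $(4,10)$ on $y^2=x^3+9x$, verified directly or with \Magma); the remaining non-torsion bookkeeping on the fibres is routine.
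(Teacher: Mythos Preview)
Your argument is correct and follows essentially the same strategy as the paper: both reduce the problem to the family $y^2=x(x-1)(x-r^4)$ (your $E_r$ is the isomorphic curve obtained by scaling), fix the abscissa to a constant so that the resulting quartic in $r$ acquires the visible rational point at $r=1$, and then argue that this quartic has positive Mordell--Weil rank. The paper reaches the same family via a twisted Huff model and takes the abscissa to be $u^4$ for a free rational parameter $u$, merely asserting that $(r,w)=(1,u^4-1)$ has infinite order; your version, with the specific choice $a=6$ and an explicit non-torsion check on $y^2=x^3+9x$, together with the Mazur-based bookkeeping on the fibres, is if anything more fully justified.
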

\begin{Proof}
To guarantee the existence of such rational number $r$, one has \begin{eqnarray*}\label{eq1}x_1=\frac{2s}{1+s^2},\qquad x_2=\frac{2t}{1+t^2}=r^2\frac{2s}{1+s^2}.
\end{eqnarray*}
In other words, the existence of such pair yields a rational point on the following planar curve
\[\mathcal{E}_{r}:t(s^2+1)=r^2s(t^2+1).\]
The curve $\mathcal{E}_{r}$ is a twisted Huff curve, see \cite{Joye}. In particular, the above equation describes an elliptic curve whose Weierstrass equation is \[E'_{r}:y^2=x(x-1)(x-r^4).\]
The curves $\mathcal{E}_r$ and $E'_r$ are birationally isomorphic via the following transformations \[(x,y)=\left(-r^2(r^2t-s)/(-t+r^2s),-r^2(r^4-1)/(-t+r^2s)\right),\qquad  (s,t)=\left((x-r^4)/y,r^2(x-1)/y\right),\] see \cite[\S 3.3]{Joye} for details. The above Weierstrass equation $E'_r$ is a Legendre equation, and the torsion group of the curve contains $\Z/2\Z\times\Z/2\Z$.

 Fix $u\in\Q\setminus\{0,1\}$. Setting $x=u^4$,
we have $y^2=u^4(u^4-1)(u^4-r^4).$ Setting $w=y/u^2$, one obtains the quartic elliptic curve $$H: w^2 = - (u^4-1)r^4+(u^4-1)u^4.$$
The point $P=(r,w)=(1,u^4-1)$ is a point of infinite order on the curve $H$. In conclusion, there are infinitely many $r$ such that  $$(x,y)=(u^4, u^2(u^4-1))\in E'_{r}(\Q)\qquad\textrm{is a point of infinite order}.$$
More precisely, if we choose $r$ to be the $r$-coordinate of $mP$, $m\ne \pm 1$, in $H(\Q)$, then $\mathcal{E}_{r}$ is a curve of positive rank. A point of infinite order may be found on the latter curve by using the transformation in \cite{Joye} to find the image of $(u^4, u^2(u^4-1))$ in $\mathcal{E}_{r}(\Q)$.
\end{Proof}
\begin{example}
In this example we produce three pairs of rational points on the unit circle that form $r^2$-geometric progression sequences when $r=5/4$. For $(s,t)=(8,1/5)$, we find the points $(x_1,y_1)=(16/65,63/65))$ and $(x_2,y_2)=(5/13,12/13)$. For $(s,t)=(64/273,21/52)$, we obtain the pair $(34944/78625,70433/78625)$ and $(2184/3145,2263/3145)$. When $(s,t)=(37523/119144,67159/41605)$, we get the following pair of rational points $(8941280624/15603268265,12787317207/15603268265)$, $(2794150195/3120653653,1389677628/3120653653)$ on the unit circle.

We recall that the three points $(s,t)$ above are rational points on the curve $\EE_{5/4}$ in the proof of Proposition \ref{prop1}. Moreover, the images of these three rational points in the elliptic curve described by the Weierstrass equation $E'_{5/4}$ are $(125/128, 375/2048)$, $(4225/256,61425/1024)$ and $(351125/114242, 876825375/436861408)$, respectively.
\end{example}	
\begin{Remark}
The rational values $r$ are parametrized by rational points on a conic in Lemma \ref{lem1}, whereas the rational values $r$ are parametrized by rational points on an elliptic curve of positive rank in Proposition \ref{prop1}.
\end{Remark}

\section{Geometric progression sequences of length $3$ on the unit circle}\label{sec:3}

Assume that the points
$$(x_1,y_1)=(u/r,f),\,\,(x_2,y_2)=(u,g),\,\,(x_3,y_3)=(ur,h)$$
are in $C(\Q)$ where $C:x^2+y^2=1$. Set

\begin{equation}\label{main-par}
(u/r,f)=\left(\dfrac{2t}{1+t^2},\dfrac{1-t^2}{1+t^2}\right),\,\,(u,g)=\left(\dfrac{2s}{1+s^2},\dfrac{1-s^2}{1+s^2}\right),
\end{equation}
so that $r=\dfrac{2s}{1+s^2}\cdot \dfrac{1+t^2}{2t}$. Now, the fact that $ (x_3,y_3)\in C(\Q)$ is equivalent to the following condition
$$h^2=1-\left(\dfrac{2s}{1+s^2}\right)^4 \cdot \left(\dfrac{1+t^2}{2t}\right)^2.$$
Equivalently, one has
\begin{equation}\label{EC1-4}
t^2(s^2+1)^4-4s^4(t^2+1)^2=H^2,
\end{equation}
where $H=h(s^2+1)^2t.$
The curve \eqref{EC1-4} is a quartic elliptic curve over $\Q(s)$, with the $\Q(s)$-rational point $(t,H)=(s,s^5-s)$. It can be described by the Weierstrass equation
\begin{equation}\label{EC1-3}
E_s: y^2=x(x+16s^4)(x+(1+s^2)^4),
\end{equation}
and its torsion group is $\Z/2\Z\times \Z/4\Z$, see for example \cite[\S 2]{Dujella}.

Now we give the main result:
\begin{Theorem}
\label{thm1}
There are infinitely many rational numbers $r$ such that for each $r$ there is an $r$-geometric progression sequence of length at least $3$ on the unit circle $x^2+y^2=1$.
\end{Theorem}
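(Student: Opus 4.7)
The plan is to exploit the quartic elliptic curve $(\ref{EC1-4})$ parameterized by $s$, together with its obvious $\Q(s)$-point $(t,H)=(s,s^5-s)$. Observe first that this point is useless on its own: it corresponds to $t = s$, so that $r = \frac{2s}{1+s^2}\cdot\frac{1+s^2}{2s} = 1$, a degenerate geometric progression. The idea is therefore to transport this point to the Weierstrass model $E_s$ of $(\ref{EC1-3})$, show that the resulting point $P_s$ is \emph{not} torsion, and then use its multiples $nP_s$ to produce infinitely many genuine rational $t$-values for each fixed $s$, each yielding a nontrivial ratio $r$.

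First I would carry out the explicit birational change of variables from $(\ref{EC1-4})$ to $(\ref{EC1-3})$ and record the coordinates of $P_s \in E_s(\Q(s))$. Second, I would prove $P_s$ has infinite order on $E_s$. Since the stated torsion subgroup is $\Z/2\Z\times \Z/4\Z$, this reduces to checking that the $x$-coordinate of $P_s$ is not among the finitely many explicit $2$- and $4$-torsion $x$-coordinates on $E_s$. A more practical route is to specialize at a convenient value $s = s_0\in \Q$ and verify by a direct computer algebra calculation that the specialized point has infinite order in $E_{s_0}(\Q)$; by Silverman's specialization theorem this forces $P_s$ to be of infinite order on $E_s(\Q)$ for all but finitely many $s \in \Q$. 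Third, for each such admissible $s$, the iterates $nP_s$ ($n \ge 2$) produce infinitely many rational points $(t_n,H_n)$ on $(\ref{EC1-4})$ with $t_n \neq s$; each gives a bona fide three-term $r_n$-geometric progression on $C(\Q)$ with $u = 2s/(1+s^2)$ and $r_n = \frac{s(1+t_n^2)}{t_n(1+s^2)}$. Distinct $t_n$ yield distinct $r_n$ (up to finitely many collisions) because $t \mapsto r(t)$ is a non-constant rational function, so a single admissible $s$ already produces infinitely many values of $r$.

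The chief obstacle is the non-torsion claim in step two. Once the image coordinates of $P_s$ are written out, the verification is a finite check, but it is the one place where the arithmetic of the elliptic curve $E_s$ actually enters rather than formal parameterization. Secondary care points will include ensuring that $t_n \notin \{0,\pm 1, \pm s\}$ and $r_n \neq \pm 1$ for almost all $n$, so that the three constructed points are genuinely distinct rational points on $C(\Q)$ and do not collapse onto one another or onto the intersection of $C$ with the coordinate axes. These are handled by the elementary observation that the $t$-coordinates of the multiples of a non-torsion point cannot repeat any fixed rational value infinitely often.
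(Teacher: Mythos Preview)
Your plan has a genuine gap at the crucial step: the image $P_s$ of $(t,H)=(s,s^5-s)$ on $E_s$ is \emph{torsion}, not of infinite order, so the whole scheme collapses.

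Here is why. On the quartic \eqref{EC1-4} there are eight obvious $\Q(s)$-rational points, namely
\[
(\pm s,\ \pm(s^5-s))\quad\text{and}\quad(\pm 1/s,\ \pm(s^4-1)/s),
\]
coming from the symmetries $t\mapsto -t$ and $t\mapsto 1/t$ of the quartic. All eight correspond to $r=\pm 1$ (recall that $2t/(1+t^2)$ is invariant under $t\mapsto 1/t$), so none of them produces a nontrivial progression. Under any birational isomorphism with $E_s$ these eight points land in $E_s(\Q(s))$, and since the torsion subgroup of $E_s(\Q(s))$ already has order $8$ (it is $\Z/2\Z\times\Z/4\Z$), they are precisely the torsion points. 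In fact $E_s$ has Mordell--Weil rank $0$ over $\Q(s)$: there is no $\Q(s)$-section of infinite order, and hence no rational function $t(s)$ giving a nontrivial $r(s)$. Consequently, whatever explicit map you write down, $P_s$ will be one of the eight torsion points, and your specialization check will simply confirm that $P_{s_0}$ is torsion for every $s_0$ you try. The iterates $nP_s$ never leave the set $\{t=\pm s,\pm 1/s\}$.

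This is exactly why the paper does \emph{not} work generically in $s$. Instead it fixes the candidate $x$-coordinate $x=8s^3(1+s^2)$ on $E_s$ and asks for which rational $s$ this is the abscissa of a rational point; the condition is that $s^4-2s^3+6s^2-2s+1$ be a square, which cuts out an auxiliary elliptic curve $G:v^2=u^3-972u$ of rank $1$. For the infinitely many $s$ produced by $G$, the resulting point on $E_s$ has $x$-coordinate distinct from all torsion $x$-coordinates (namely $0$, $-16s^4$, $-(1+s^2)^4$, $\pm 4s^2(1+s^2)^2$) for $s\neq 0,\pm1$, hence is of infinite order, and the argument goes through. The extra layer --- passing to the curve $G$ to manufacture non-torsion points on the fibres $E_s$ --- is the key idea your proposal is missing.
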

\begin{proof}
The above discussion asserts that one only needs to find infinitely many rational values for $s$ such that $E_s$ has positive rank over $\Q$.

If $x=8s^3(1+s^2)$ is the $x$-coordinate of a rational point on $E_s$, then $s^4-2s^3+6s^2-2s+1$ must be a rational square. In other words, one will get a rational point on the quartic elliptic curve
\begin{equation}\label{EC2-4}
w^2=s^4-2s^3+6s^2-2s+1
\end{equation}	
which possesses the $\Q$-rational point $(s,w)=(0,\pm 1)$. Using the procedure in \cite[chapter 3, pp. 89-91]{JC}, we see that the latter quartic curve is birationally equivalent to the elliptic curve defined by the Weierstrass equation
\begin{equation}\label{EC2-3}
G: v^2=u^3-972u
\end{equation}	
and the point $(u,v)=(-27,81)\in G(\Q)$ is a point of infinite order. In fact, one can check using MAGMA \cite{Magma} that $\rank(G(\Q))=1$. This completes the proof.
\end{proof}
The following corollary follows from the proof above.
\begin{Corollary}
There are infinitely many rational numbers $s$ such that the point $(2s/(1+s^2),(1-s^2)$ $/(1+s^2))\in C(\Q)$ lies in infinitely many geometric progression sequences on $C$ of length at least $3$.
\end{Corollary}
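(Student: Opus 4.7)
The plan is to fix the middle term of the progression and let the first and third terms vary, thereby reading the construction of Theorem~\ref{thm1} ``in the opposite direction.'' In the proof of Theorem~\ref{thm1} one exhibits rational values of $s$ by pulling back rational points on the rank-one curve $G : v^2 = u^3 - 972u$ through the quartic \eqref{EC2-4}; since $G(\Q)$ is infinite, this in fact produces infinitely many such values $s \in \Q$, and for all but finitely many of these the point on $E_s$ with $x$-coordinate $8s^3(1+s^2)$ has infinite order (torsion being $\Z/2\Z \times \Z/4\Z$). Call this infinite set of admissible parameters $S \subset \Q$.

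Fix any $s \in S$ and consider the middle point $(u,g) = (2s/(1+s^2),(1-s^2)/(1+s^2)) \in C(\Q)$. Every rational point on $E_s$ transports through the birational isomorphism with the quartic \eqref{EC1-4} to a rational pair $(t,H)$ satisfying
\[
H^2 = t^2(s^2+1)^4 - 4s^4(t^2+1)^2.
\]
As in the setup preceding \eqref{main-par}, such a $t$ produces the remaining two points $(x_1,y_1) = (2t/(1+t^2), (1-t^2)/(1+t^2))$ and $(x_3,y_3) = (ur,h)$ of a $3$-term geometric progression on $C$ through $(u,g)$, with ratio
\[
r = \frac{s(1+t^2)}{(1+s^2)\,t}.
\]

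Because $E_s(\Q)$ is infinite for $s \in S$, this construction yields infinitely many rational values of $t$. The ratio function $t \mapsto r(t) = s(1+t^2)/((1+s^2)t)$ satisfies $r(t) = r(1/t)$ but is otherwise two-to-one, so an infinite set of $t$'s maps to an infinite set of $r$'s. Consequently the fixed middle point $(u,g)$ lies in infinitely many $r$-geometric progression sequences of length at least~$3$ on the unit circle. Since distinct elements of $S$ give distinct middle points on $C$, this also yields infinitely many such rational points $P \in C(\Q)$, proving the corollary.

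The only real obstacle is the bookkeeping: one must verify that the family produced by Theorem~\ref{thm1} is genuinely infinite (which follows from $\rank G(\Q) = 1$), discard the finitely many degenerate parameters such as $t \in \{0, \pm 1\}$ or $r = \pm 1$ corresponding to the torsion of $E_s$, and confirm that rational points on $E_s$ map to infinitely many distinct values of $t$ (which follows from the fact that the $x$-coordinate map $E_s \to \PP^1$ is finite-to-one, together with the birational relationship between $E_s$ and the quartic \eqref{EC1-4}).
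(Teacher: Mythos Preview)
Your proposal is correct and follows essentially the same approach as the paper. The paper itself gives no separate argument for the corollary, stating only that it ``follows from the proof above''; your write-up is precisely the unpacking of that remark---fixing $s$ so that $E_s$ has positive rank (as supplied by the rank-one curve $G$), and then observing that the infinitely many rational points on $E_s$ yield infinitely many values of $t$, hence infinitely many ratios $r$, all sharing the same middle point $(2s/(1+s^2),(1-s^2)/(1+s^2))$.
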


In the following table we give examples of geometric progression sequences of length at least $3$ in $C(\Q)$.

\begin{table}[h]
	\centering
	\begin{tabular}{|c|c|}
		\hline
		$(r,s,t)$ & $(x_1,x_2,x_3)$\\\hline
		(39/25, 3, 5) & (5/13, 3/5, 117/125) \\\hline
		(6409/3034, 4, 328/37)& (24272/108953, 8/17, 1508/1517)\\\hline
		(5987825/3616561, 5, 1537/181) & (278197/1197565, 5/13, 29939125/47015293)\\\hline
		(55045/24531, 6, 234/17) & (7956/55045, 12/37, 220180/302549)\\\hline
		(7935762913/2225017375, 7, 125885/4949)& (623004865/7935762913, 7/25, 7935762913/7946490625)\\\hline
		(6548713889/6051759025, 8, 80392/9265)& (1489663760/6548713889, 16/65, 104779422224/393364336625)\\\hline
	\end{tabular}
	\vspace{0.2cm} \caption{}
\end{table}

One notices that if $ur^2$ is the $x$-coordinate of a rational point on the unit circle in Theorem \ref{thm1}, then this implies the existence of an $r$-geometric progression sequence of length at least 4. However, this will require studying rational solutions of the following system of Diophantine equations:
\begin{eqnarray}\label{intersection}
t^2(s^2+1)^4-4s^4(t^2+1)^2&=&H_1^2\nonumber\\
t^4(s^2+1)^6-4s^6(t^2+1)^4&=&H_2^2.\nonumber
\end{eqnarray}
The authors intend to study the above system of equations. Moreover, they will tackle the question of the existence of geometric progression sequences on planar conics in future work.

\section{Acknowledgements}
We are grateful to the anonymous referee for many valuable suggestions and useful comments. We would also like to thank Professors Andrew Bremner, Andrej Dujella and Allan J. Macleod for their usefull ideas.  The first and third authors were supported by the Research Fund of Bursa Uluda\u{g} University under Project No: F-2020/8.

\end{document}